\documentclass{amsart}
\usepackage{amssymb,latexsym,amsmath,amscd,amsthm,amsfonts}
\begin{document}



\title {Reflexivity of Operator Algebras of finite split strict multiplicity}


\author{Raluca Dumitru, Costel Peligrad, Bogdan Visinescu}

\address{Raluca Dumitru, University of North Florida}
\address{Istitute of Mathematics of the Romanian Academy}
\email{raluca.dumitru@unf.edu}
\address{Costel Peligrad, University of Cincinnati}
\email{costel.peligrad@uc.edu}
\address{Bogdan Visinescu, University of North Florida}
\address{Istitute of Mathematics of the Romanian Academy}
\email{b.visinescu@unf.edu}
\subjclass[2000]{Primary 47A15; Secondary 47L75}
\thanks{}
\maketitle

\begin{abstract}
We study the invariant subspaces of abelian operator algebras of finite split strict multiplicity. We give sufficient conditions for the reflexivity and hereditary reflexivity of these algebras.

\end{abstract}

\

\

\newtheorem{definition}{Definition}
\newtheorem{lemma}[definition]{Lemma}

\theoremstyle{definition}
\newtheorem{remark}[definition]{Remark}
\theoremstyle{plain}
\newtheorem{proposition}[definition]{Proposition}
\newtheorem{theorem}[definition]{Theorem}
\newtheorem{corollary}[definition]{Corollary}

Let $A\subset B(X)$ be a strongly closed algebra of operators on a Banach space $X$. We denote by $Lat(A)$ the set of all closed subspaces of $X$ that are invariant for every $a\in A$. Let $algLat(A)=\{ T\in B(X)\;|\;TK\subseteq K,$ for every $K\in Lat(A)\}$.

An algebra $A$ is called reflexive if $A=algLat(A)$. More generally, a subspace $\mathcal{L}\subseteq B(X)$ is called reflexive if for every $T\in B(X)$ such that $Tx \in \overline{\mathcal{L}x}$, for every $x\in X$, it follows that $T\in \mathcal{L}$. An algebra $A$ is called hereditarily reflexive if every strongly closed subspace $\mathcal{L}\subseteq A$ is reflexive.

In \cite{Lambert}, A. Lambert initiated the study of strictly cyclic algebras and proved that a unital, strictly cyclic, semisimple operator algebra on a Hilbert space is reflexive. In \cite{Peligrad} the result of Lambert is extended to the case of Banach spaces and is proven that every norm closed, abelian, unital, semisimple, strictly cyclic algebra is in addition hereditarily reflexive. In this paper we investigate the similar problem for algebras of finite strict multiplicity. In \cite{Herrero} an algebra $A\subset B(X)$ is said to be of strict multiplicity $p\in\mathbb{N}$ (denoted by $s.m.(A)=p$), if there are $x_{1},x_{2},...,x_{p}\in X$ such that $X= $ linear span $\{ax_{i}\;|\; a\in A;\; i=1,2,...,p\}$.

We will consider a special case of algebras of finite strict multiplicity, namely algebras of finite split strict multiplicity. An algebra $A\subset B(X)$ is said to be of split strict multiplicity $p\in \mathbb{N}$ (denoted by $s.s.m(A)=p$) if there are $x_{1},x_{2},...,x_{p}\in X$ such that the subspaces $\{ax_{i}\;|\;a\in A\}=X_{i}$ are closed and $X=\underset{i=1}{\overset{p}\sum}\oplus X_{i}$.

Proposition \ref{ssmprop} and Remark \ref{rem2} below provide a characterization of abelian operator algebras of split strict multiplicity 2. In particular, if $A\subseteq B(X)$ is strictly cyclic, then $A^{(2)}=\{a\oplus a\;|\;a\in A\}\subseteq B(X\oplus X)$ is the simplest example of an algebra of split strict multiplicity 2.

\begin{proposition}\label{ssmprop} Let $A\subseteq B(X)$ be an abelian, unital, norm closed algebra of strict split multiplicity $p$. Then there exist closed ideals $I_{k}\subseteq A$, $k=1,2,...,p$ and Banach space isomorphisms $S_{k}:A/I_{k}\to Ax_{k}$ such that:

\begin{itemize}
\item[(i)] $S_{k}(\varphi_{k}(a))=ax_{k}.$
\item[(ii)] $ S=\underset{k=1}{\overset{p}\sum}\oplus S_{k}:\underset{k=1}{\overset{p}\sum}\oplus A/I_{k} \to X$ is a Banach space isomorphism.
\item[(iii)] The mapping $T:A\to B(\underset{k=1}{\overset{p}\sum}\oplus A/I_{k})$ defined by $Ta=S^{-1}aS$ is a Banach algebra isomorphism.
\item[(iv)] If $p=2$, $I_{1}+I_{2}$ is closed.
\end{itemize}

\end{proposition}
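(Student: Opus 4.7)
Parts (i)--(iii) should come out by routine open-mapping arguments, so I would set them up as follows. Define $I_{k}=\ker(a\mapsto ax_{k})\subseteq A$, which is a closed two-sided ideal (closedness from continuity, two-sidedness from commutativity of $A$). The map $a\mapsto ax_{k}$ descends to a continuous linear bijection $S_{k}\colon A/I_{k}\to X_{k}$, and since $X_{k}$ is closed in $X$ the open mapping theorem promotes $S_{k}$ to a Banach space isomorphism, giving (i). Assembling $S=\bigoplus S_{k}$, the direct sum hypothesis on $X$ together with open mapping yields (ii). For (iii), $T(a)=S^{-1}aS$ is a bounded algebra monomorphism (injective because $S$ has range $X$, so $S^{-1}aS=0$ forces $a|_{X}=0$) with continuous inverse $b\mapsto SbS^{-1}$ on its image.

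The substance is (iv), and the plan is to first prove a norm inequality and then use it to obtain boundedness below. Since $A$ is commutative, $aX_{k}=aAx_{k}=Aax_{k}\subseteq X_{k}$, so each $a\in A$ is block-diagonal with respect to $X=X_{1}\oplus X_{2}$. Writing $a=a_{1}\oplus a_{2}$ with $a_{k}=a|_{X_{k}}$, the topological direct sum yields $\|a\|_{B(X)}\asymp\max(\|a_{1}\|,\|a_{2}\|)$, and transporting $a_{k}$ through $S_{k}$ via the identity $(S_{k}^{-1}a_{k}S_{k})\varphi_{k}(b)=\varphi_{k}(ab)=\varphi_{k}(a)\varphi_{k}(b)$ identifies it with left multiplication by $\varphi_{k}(a)$ in the Banach algebra $A/I_{k}$, whose operator norm is at most $\|\varphi_{k}(a)\|$ by submultiplicativity. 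Combining, I expect a constant $C>0$ with
\[
\|a\|_{A}\;\leq\;C\,\max\bigl(\|\varphi_{1}(a)\|,\|\varphi_{2}(a)\|\bigr),\qquad a\in A. \qquad (\ast)
\]

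To finish (iv), I would first observe that $I_{1}\cap I_{2}=\{0\}$: if $a$ annihilates $x_{1}$ and $x_{2}$, then for every $y=b_{1}x_{1}+b_{2}x_{2}\in X$, commutativity gives $ay=b_{1}(ax_{1})+b_{2}(ax_{2})=0$, so $a=0$ in $B(X)$. Now applying $(\ast)$ to $v\in I_{2}$ (for which $\varphi_{2}(v)=0$) gives $\|v\|\leq C\|\varphi_{1}(v)\|$, so $\varphi_{1}|_{I_{2}}\colon I_{2}\to A/I_{1}$ is injective and bounded below, hence $\varphi_{1}(I_{2})=(I_{1}+I_{2})/I_{1}$ is closed in $A/I_{1}$. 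Since $I_{1}$ is closed, this is equivalent to $I_{1}+I_{2}$ being closed in $A$.

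The main obstacle will be a clean derivation of $(\ast)$, which requires reconciling three a priori distinct norms --- the $B(X)$-operator norm on $A$, the quotient norms on $A/I_{k}$, and the $B(X_{k})$-norms of the restrictions --- and verifying all three are uniformly equivalent. The restriction to $p=2$ is also essential: $I_{1}\cap I_{2}=\{0\}$ fails for $p\geq 3$, breaking the injectivity needed in the final step.
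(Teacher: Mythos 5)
Your proof is correct; (i)--(iii) coincide with the paper's argument (same ideals $I_{k}=\{a\in A: ax_{k}=0\}$, same maps $S_{k}$, open mapping theorem), but for (iv) you take a genuinely different, and equally valid, route. Both proofs rest on the same structural fact: every $a\in A$ is block-diagonal for $X=X_{1}\oplus X_{2}$, and an element of $I_{2}$ acts as zero on $X_{2}$, so its full operator norm is controlled by its action on $X_{1}$. The paper exploits this directly with a sequence chase: if $i_{1,n}+i_{2,n}\to a_{0}$ then ${i_{2,n}|}_{X_{1}}={(i_{1,n}+i_{2,n})|}_{X_{1}}$ converges while ${i_{2,n}|}_{X_{2}}=0$, so $\{i_{2,n}\}$ is norm-convergent to some $i_{2}\in I_{2}$ and $i_{1,n}\to a_{0}-i_{2}\in I_{1}$. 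You instead package the fact as the uniform estimate $(\ast)$, specialize to $v\in I_{2}$ to get $\|v\|\le C\|\varphi_{1}(v)\|$, and invoke the closed-range criterion for a map bounded below on a complete domain, pulling $\varphi_{1}(I_{2})=(I_{1}+I_{2})/I_{1}$ back through the continuous quotient map. Your version costs slightly more setup but buys the quantitative two-sided norm equivalence $\|a\|\asymp\max_{k}\|\varphi_{k}(a)\|$ (the reverse bound $\|\varphi_{k}(a)\|\le\|a\|$ being trivial), which is close in spirit to the norm $|||a|||=\sum_{j}\|\varphi_{j}(a)\|$ that the paper later needs for hereditary reflexivity. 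The ``main obstacle'' you flag at the end is not a real one: the three norms are reconciled precisely by the bounded projections onto $X_{1},X_{2}$ coming from the topological direct sum and by the isomorphisms $S_{k}$ you already established in (i). A minor remark: injectivity of ${\varphi_{1}|}_{I_{2}}$ (i.e.\ $I_{1}\cap I_{2}=(0)$) is not actually needed for the closed-range step --- boundedness below on the complete space $I_{2}$ already suffices.
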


\begin{proof}
Let $I_{k}=\{a\in A\;|\;ax_{k}=0\}$, $k=1,2,...,p.$ Clearly, $I_{k}$ are closed ideals of $A$. Let $S_{k}:A/I_{k}\to Ax_{k}$ be defined by $S_{k}(\varphi_{k}(a))=ax_{k}$. Then $S_{k}$ is well defined, one to one and onto for all $k=1,2,...,p.$ To see that $S_{k}$ is continuous, let $\{a_{n}\}_{n}\subset A$ be such that $\varphi_{k}(a_{n})\underset{n}{\to}0$. Then there is a sequence $\{i_{k,n}\}_{n}\subset I_{k}$ such that $a_{n}+i_{k,n}\underset{n}{\to}0$ in the norm of $A$. It the follows that $(a_{n}+i_{k,n})x_{k}\underset{n}{\to}0$. Since $i_{k,n}\in I_{k}$ it follows that $a_{n}x_{k}\underset{n}{\to}0$, so $S_{k}$ is a continuous, one-to-one, onto mapping between the Banach spaces $A/I_{k}$ and $Ax_{k}$. By the open mapping theorem, $S_{k}$ is a Banach space isomorphism.

Conditions (ii) and (iii) are straightforward to ckeck. We verify next condition (iv).

Let $a_{n}=i_{1,n}+i_{2,n}\underset{n}{\to}a_{0}$, where $\{i_{1,n}\}\subset I_{1}$ and $\{i_{2,n}\}\subset I_{2}$. Then, in particular, the restriction ${a_{n}|}_{X_{1}}={i_{2,n}|}_{X_{1}}\underset{n}{\to}{a_{0}|}_{X_{1}}$. On the other hand, ${i_{2,n}|}_{X_{2}}=0$ for every $n\in \mathbb{N}$. It then follows that the sequence $\{i_{2,n}\}$ is convergent in norm. Since $I_{2}$ is norm closed, it follows that there exists $i_{2}\in I_{2}$ such that $i_{2,n}\underset{n}{\to}i_{2}$. Hence $a_{n}-i_{2,n}\to a_{0}-i_{2}$ in norm and since $a_{n}-i_{2,n}=i_{1,n}\in I_{1}$ and $I_{1}$ is closed, we have $a_{0}-i_{2}=i_{1}\in I_{1}$, and thus $a_{0}=i_{1}+i_{2}$ and $I_{1}+I_{2}$ is closed as claimed.

\end{proof}

\begin{remark}\label{rem2} Let $A$ be an abelian, unital Banach algebra and $I_{1}, I_{2}$ be two closed ideals such that $I_{1}+I_{2}$ is closed and $I_{1}\cap I_{2}=(0)$. Then the mapping $a\to T_{a}\in B(A/I_{1}\oplus A/I_{2})$ where $T_{a}(\varphi_{1}(a_{1})\oplus \varphi_{2}(a_{2}))=\varphi_{1}(aa_{1})\oplus \varphi_{2}(aa_{2})$ is a continuous embedding of $A$ into $B(A/I_{1}\oplus A/I_{2})$.

By Proposition \ref{ssmprop}, any abelian, unital, norm closed $A\subseteq B(X)$, ($X$ Banach space) with $s.s.m(A)=2$ is spatially isomorphic with $\{T_{a}\;|\; a\in A\}$ if  $I_{k}=\{a\in A\; |\; ax_{k}=0\}$, for $k=1,2$.

\end{remark}

\begin{remark}\label{rem3}

(i) Let $A\subseteq B(X)$ be an abelian, unital, norm closed subalgebra with $s.s.m.(A)=2$. Then, the subspace $D_{2}=\{ax_{1}\oplus ax_{2}\;|\;a\in A\}\subseteq X$ is closed. 

Indeed, let $\{a_{n}\}\subset A$ be such that $a_{n}x_{1}\oplus a_{n}x_{2}\to ax_{1}\oplus bx_{2}\in X$. Then, by Proposition \ref{ssmprop}, $\varphi_{1}(a_{n}\to \varphi_{1}(a)$ and $ \varphi_{2}(a_{n})\to \varphi_{2}(b)$. Therefore, there are sequences $\{i_{1,n}\}\subset I_{1}$, $\{i_{2,n}\}\subset I_{2}$ such that $a_{n}+i_{1,n}\underset{n}{\to} a$ and $a_{n}+i_{2,n}\underset{n}{\to}b$. Hence $i_{1,n}-i_{2,n}\underset{n}{\to}a-b$. By Proposition \ref{ssmprop} (iv), $a-b\in I_{1}+I_{2}$, so there exist $i_{1}\in I_{1}$, $i_{2}\in I_{2}$ such that $a-b=i_{1}-i_{2}$. Hence $a-i_{1}=b-i_{2}$. Let $a_{0}= a-i_{1}=b-i_{2}$. Then $a_{0}x_{1}=ax_{1}$ and $a_{0}x_{2}=bx_{2}$, so $a_{n}x_{1}\oplus a_{n}x_{2} \underset{n}{\to} a_{0}x_{1}\oplus a_{0}x_{2}\in D_{2}$.  

(ii) Let $A\subseteq B(X)$ be an abelian, unital, strongly closed subalgebra with $s.s.m.(A)=p\in\mathbb{N}$. Then $D_{p}=\{ax_{1}\oplus ax_{2} \oplus ...\oplus ax_{p}\;|\;a\in A\}\subseteq X$ is closed.

Indeed, let $\{a_{n}\}\subset A$ be such that $a_{n}x_{1}\oplus a_{n}x_{2} \oplus ...\oplus a_{n}x_{p}\underset{n}{\to}b_{1}x_{1}\oplus ...\oplus b_{p}x_{p}\in X$. Then, since $A$ is abelian, $a_{n}c_{j}x_{j}\to b_{j}c_{j}x_{j}$ for every $c_{j}\in A$, $j=1,2,...,p$. Hence $a_{n}(\underset{j=1}{\overset{p}{\sum}}c_{j}x_{j})\to \underset{j=1}{\overset{p}{\sum}}b_{j}c_{j}x_{j}$. Therefore the sequence $\{a_{n}x\}$ is convergent for every $x\in X$. Since $A$ is strongly closed, there is $a_{0}\in A$ such that $a_{n}x\to a_{0}x$ for all $x\in X$. Hence $a_{n}x_{1}\oplus a_{n}x_{2} \oplus ...\oplus a_{n}x_{p}=a_{n}(x_{1}\oplus ...\oplus x_{p})\to a_{0}(x_{1}\oplus ...\oplus x_{p})=a_{0}x_{1}\oplus a_{0}x_{2} \oplus ...\oplus a_{0}x_{p}\in D_{p}$

(iii) Let $A\subseteq B(X)$ be an abelian, unital, norm closed subalgebra with $s.s.m.(A)=p\in\mathbb{N}$. Let $\mathcal{M}\subseteq A$ be a maximal ideal. Then the subspaces $\mathcal{M}x_{j}=\{mx_{j}\;|\;m\in \mathcal{M}\}\subseteq X$, $j=1,...,p$, are closed. 

Indeed, the statement follows if we show that $\mathcal{M}+I_{j}$ is closed. If $I_{j}\subseteq \mathcal{M}$ then $\mathcal{M}+I_{j}=\mathcal{M}$. If $I_{j}\not\subseteq \mathcal{M}$ then $\mathcal{M}+I_{j}=A$. In either case $\mathcal{M}+I_{j}$ is closed.

\end{remark}

In what follows we will denote by $\mathcal{R}_{j}$ the radical of the (abelian, unital) Banach algebra $A/I_{j}$, i.e. the intersection of all the maximal ideals of $A/I_{j}$. The next result shows that every abelian, unital, strongly closed subalgebra $A\subseteq B(X)$, with $s.s.m.(A)=p\in\mathbb{N}$ is reflexive modulo the radicals. 

By Proposition \ref{ssmprop} we can assume that $X=\underset{j=1}{\overset{p}\sum}\oplus A/I_{j}$ and $T_{a}(\underset{j=1}{\overset{p}\sum}\oplus \varphi _{j}(a_{j}))=\underset{j=1}{\overset{p}\sum}\oplus \varphi _{j}(aa_{j})$ for $a\in A$.

\begin{theorem}\label{thm4} Let $A\subseteq B(X)$ be an abelian, unital, strongly closed subalgebra with $s.s.m.(A)=p\in\mathbb{N}$. For every $T\in algLat(A)$ there is $a_{0}\in A$ such that $(T-T_{a_{0}})(x)\in \underset{j=1}{\overset{p}\sum}\oplus \mathcal{R}_{j}$ for every $x\in X$.
\end{theorem}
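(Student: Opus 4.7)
The plan is to exploit two families of closed $A$-invariant subspaces highlighted in Remark \ref{rem3}: the ``diagonal'' $D_p = \{a(x_1+\cdots+x_p) : a \in A\}$, which will supply the element $a_0$, and the subspaces $\mathcal{M}x_j$ for $\mathcal{M}$ a maximal ideal of $A$ containing $I_j$, which will force the residual $T - T_{a_0}$ into the radicals componentwise.

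Let $e = x_1 + \cdots + x_p \in D_p$. Since $D_p \in Lat(A)$ and $T \in algLat(A)$, we have $Te \in D_p$, so there exists $a_0 \in A$ with $Te = a_0 e = T_{a_0}e$. Each $X_j = Ax_j$ is also closed and $A$-invariant, hence $T(X_j) \subseteq X_j$ and $T$ splits as $T = T_1 \oplus \cdots \oplus T_p$ with $T_j = T|_{X_j}$. Reading $Te = T_{a_0}e$ componentwise yields $T_j(x_j) = a_0 x_j$ for every $j$. Since $T_{a_0} \in A \subseteq algLat(A)$, the operator $\widetilde T := T - T_{a_0}$ is again in $algLat(A)$ and satisfies $\widetilde T_j(x_j) = 0$ for every $j$; the problem thus reduces to showing $\widetilde T_j(X_j) \subseteq S_j(\mathcal{R}_j)$ for each $j$.

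Now fix $j$ and a maximal ideal $\mathcal{M} \subseteq A$ with $I_j \subseteq \mathcal{M}$. By Remark \ref{rem3}(iii), $\mathcal{M}x_j$ is a closed $A$-invariant subspace of $X_j$, so $\widetilde T_j(\mathcal{M}x_j) \subseteq \mathcal{M}x_j$. The quotient $X_j / \mathcal{M}x_j \cong (A/I_j)/(\mathcal{M}/I_j) \cong A/\mathcal{M}$ is one-dimensional by the Gelfand--Mazur theorem, so $\widetilde T_j$ descends to a scalar operator there. Because $x_j \notin \mathcal{M}x_j$ (otherwise $1 \in \mathcal{M} + I_j = \mathcal{M}$, impossible) while $\widetilde T_j(x_j) = 0$, this scalar must vanish, giving $\widetilde T_j(X_j) \subseteq \mathcal{M}x_j$. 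Intersecting over all maximal ideals $\mathcal{M}$ containing $I_j$ and noting that $S_j$ carries $\mathcal{R}_j$ onto $\bigcap_{\mathcal{M} \supseteq I_j} \mathcal{M}x_j$ completes the argument. No step is particularly delicate once Remark \ref{rem3} is available; the key conceptual move is that the one-dimensionality of $X_j/\mathcal{M}x_j$ collapses the entire residual statement to the single equation $\widetilde T_j(x_j) = 0$.
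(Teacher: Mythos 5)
Your proof is correct and follows essentially the same route as the paper: both obtain $a_{0}$ from the invariance of the closed diagonal $D_{p}$, then use the invariant subspaces $\mathcal{M}x_{j}$ for maximal ideals $\mathcal{M}\supseteq I_{j}$ and intersect over all such $\mathcal{M}$ to land in $\mathcal{R}_{j}$. Your middle step, exploiting the one-dimensionality of $X_{j}/\mathcal{M}x_{j}$ to kill the induced scalar, is just a cleaner repackaging of the paper's computation with the multiplicative functional $\Psi$ whose kernel is $\mathcal{M}$.
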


\begin{proof}

With the identification $X=\underset{j=1}{\overset{p}\sum}\oplus A/I_{j}$ and $A=\{T_{a}\;|\; a\in A\}$ from Proposition \ref{ssmprop}, let $T\in algLat(A)$. By Remark \ref{rem3} (ii) , $D_{p}\in Lat(A)$. Since $\underset{j=1}{\overset{p}\sum}\oplus \varphi _{j}(1)\in D_{p}$, there is $a_{0} \in A$ such that $T(\underset{j=1}{\overset{p}\sum}\oplus \varphi _{j}(1))=\underset{j=1}{\overset{p}\sum}\oplus \varphi _{j}(a_{0})$. In particular, if $j\in\{1,2,...,p\}$, $T(\varphi_{j}(1))=\varphi_{j}(a_{0})$. Let $\mathcal{M}\subseteq A$ be a maximal ideal and $\Psi$ the corresponding multiplicative functional on $A$ with $ker\Psi =\mathcal{M}$.

Let $a\in A$. Then $x=\Psi (a)1-a\in \mathcal{M}$ since $\Psi(x)=0$. By Remark \ref{rem3} (iii), $\varphi_{j}(\mathcal{M})\in Lat(A)$. Therefore $\varphi_{j}(x)=\Psi(a)\varphi_{j}(1)-\varphi_{j}(a)\in\varphi_{j}(\mathcal{M})$ and $T(\varphi_{j}(x))=\Psi(a)T(\varphi_{j}(1))-T(\varphi_{j}(a))\in \varphi_{j}(\mathcal{M})$. Hence
\begin{equation}\label{1}
\Psi(a)\varphi_{j}(a_{0})-T(\varphi_{j}(a))\in \varphi_{j}(\mathcal{M})
\end{equation}

Since $T\in algLat(A)$, $T(\varphi_{j}(A))\subseteq \varphi_{j}(A)$. Let $b\in A$ be such that $T(\varphi_{j}(a))=\varphi_{j}(b)$. Using relation \ref{1} above we get
\begin{equation}\label{2}
\Psi(a)a_{0}-b\in \mathcal{M}+I_{j} \text{ or}
\end{equation}
\begin{equation}\label{3}
\Psi(a)a_{0}-b+i\in \mathcal{M}\text{ for some }i\in I_{j}.
\end{equation}
Applying $\Psi$ to relation \ref{3} we get
\begin{equation}\label{4}
\Psi(aa_{0}-b+i)=0.
\end{equation}
Hence $aa_{0}-b\in \mathcal{M}+I_{j}$. It follows that $\varphi_{j}(aa_{0})-T(\varphi_{j}(a))\in\varphi_{j}(\mathcal{M})$. Since this holds in particular for every maximal ideal $\mathcal{M}\supseteq I_{j}$, it follows that $T(\varphi_{j}(a))-T_{a_{0}}(\varphi_{j}(a))\in \mathcal{R}_{j}$ and the proof is complete.

\end{proof}

Notice that in the above theorem we assumed that the algebra $A\subseteq B(X)$ is strongly closed. The next result shows that if $s.s.m.(A)=2$ we can assume that $A$ ia only norm closed.

\begin{corollary} Let $A\subseteq B(X)$ be an abelian, unital, norm closed subalgebra with $s.s.m.(A)=2$. For every $T\in algLat(A)$ there is $a_{0}\in A$ such that $(T-T_{a_{0}})(x)\in \mathcal{R}_{1}\oplus \mathcal{R}_{2}$ for every $x\in X$.

\end{corollary}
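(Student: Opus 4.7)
The plan is to inspect the proof of Theorem \ref{thm4} and observe that the strong-closure hypothesis on $A$ is used only at a single spot, namely in invoking Remark \ref{rem3} (ii) to conclude $D_{p}\in Lat(A)$; every other step requires only norm closure of $A$. For $p=2$ the required closedness of $D_{2}$ is instead supplied by Remark \ref{rem3} (i), which rests on Proposition \ref{ssmprop} (iv) and does not assume strong closure. The whole argument of Theorem \ref{thm4} therefore carries over with essentially no new work.

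Concretely, first I would apply Remark \ref{rem3} (i) to see that $D_{2}=\{ax_{1}\oplus ax_{2}\;|\;a\in A\}$ is norm closed; it is plainly $A$-invariant, so $D_{2}\in Lat(A)$. Since $T\in algLat(A)$ preserves $D_{2}$ and $\varphi_{1}(1)\oplus\varphi_{2}(1)\in D_{2}$, there is $a_{0}\in A$ with $T(\varphi_{1}(1)\oplus\varphi_{2}(1))=\varphi_{1}(a_{0})\oplus\varphi_{2}(a_{0})$. Each summand $\varphi_{j}(A)=A/I_{j}$ belongs to $Lat(A)$ and is therefore preserved by $T$; combined with the direct-sum decomposition of $X$ this forces $T(\varphi_{j}(1))=\varphi_{j}(a_{0})$ for $j=1,2$, exactly as in the proof of Theorem \ref{thm4}.

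From here the remainder of that proof applies verbatim. For each maximal ideal $\mathcal{M}\subseteq A$ with associated character $\Psi$, Remark \ref{rem3} (iii) gives $\varphi_{j}(\mathcal{M})\in Lat(A)$, and therefore $\Psi(a)\varphi_{j}(a_{0})-T(\varphi_{j}(a))\in\varphi_{j}(\mathcal{M})$. Writing $T(\varphi_{j}(a))=\varphi_{j}(b)$ for some $b\in A$ and applying $\Psi$ yields $aa_{0}-b\in\mathcal{M}+I_{j}$; intersecting over all maximal ideals $\mathcal{M}\supseteq I_{j}$ then gives $(T-T_{a_{0}})(\varphi_{j}(a))\in\mathcal{R}_{j}$ for every $a\in A$ and $j=1,2$, which is the desired conclusion. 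The main obstacle is essentially conceptual: one must recognise that Remark \ref{rem3} (i) is an honest $p=2$ strengthening of Remark \ref{rem3} (ii) which trades the strong-closure hypothesis for the closedness of $I_{1}+I_{2}$ proved in Proposition \ref{ssmprop} (iv); once that observation is in place, the strong-closure hypothesis of Theorem \ref{thm4} can be dropped in exactly the $p=2$ case.
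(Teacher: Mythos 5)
Your proposal is correct and matches the paper's proof exactly: the paper likewise observes that the only use of strong closure in Theorem \ref{thm4} is the appeal to Remark \ref{rem3} (ii) for the closedness of $D_{p}$, and that for $p=2$ Remark \ref{rem3} (i) supplies this under norm closure alone. The rest of the argument carries over verbatim, as you say.
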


\begin{proof}
The only modification in the proof of Theorem \ref{thm4} is that if $A$ is norm closed, Remark \ref{rem3} (i) (instead of (ii)) can be applied.
\end{proof}

If $A$ has spectral synthesis (see \cite{Beckhoff, O'Farrell} for examples of abelian algebras that have spectral synthesis) then $\mathcal{R}_{j}=(0)$ for all $j\in\{1,2,...,p\}$ and we have the following:

\begin{corollary}
Let $A\subseteq B(X)$ be an abelian, unital, strongly closed subalgebra with spectral synthesis. If $s.s.m.(A)=p\in \mathbb{N}$, then $A$ is reflexive.
\end{corollary}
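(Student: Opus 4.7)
The plan is to derive this as an essentially immediate consequence of Theorem \ref{thm4}. Given any $T \in algLat(A)$, that theorem supplies an element $a_{0} \in A$ such that $(T - T_{a_{0}})(x) \in \sum_{j=1}^{p} \oplus \mathcal{R}_{j}$ for every $x \in X$. The hypothesis that $A$ has spectral synthesis, as the authors point out in the sentence preceding this corollary, forces $\mathcal{R}_{j} = (0)$ for each $j = 1, 2, \dots, p$. Combining these two facts gives $(T - T_{a_{0}})(x) = 0$ for every $x \in X$, so $T = T_{a_{0}}$, which lies in $A$ under the identification $A = \{T_{a} \mid a \in A\}$ from Proposition \ref{ssmprop}.

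Since the reverse inclusion $A \subseteq algLat(A)$ is automatic (every operator leaves its own invariant subspaces invariant), this yields $A = algLat(A)$, which is exactly reflexivity of $A$. The main (and essentially only) conceptual ingredient beyond invoking Theorem \ref{thm4} is the passage from spectral synthesis of $A$ to the vanishing of each quotient radical $\mathcal{R}_{j}$, which the authors justify in the remark immediately preceding the corollary. Because everything else is formal, I do not anticipate any serious obstacle here: all the substantive work has already been carried out in establishing Theorem \ref{thm4}, and the corollary is really a clean packaging of that theorem under the extra hypothesis that trivialises the radical obstruction.
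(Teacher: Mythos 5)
Your proposal is correct and matches the paper's (implicit) argument exactly: the corollary is stated immediately after the observation that spectral synthesis forces $\mathcal{R}_{j}=(0)$ for all $j$, so it follows from Theorem \ref{thm4} precisely as you describe. No issues.
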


\begin{corollary}
Let $A\subseteq B(X)$ be an abelian, unital, norm closed subalgebra with spectral synthesis. If $s.s.m.(A)=2$, then $A$ is reflexive.
\end{corollary}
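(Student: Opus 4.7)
The plan is to reduce the statement directly to the preceding Corollary, which already does the analytical work. Given $T \in algLat(A)$, apply that Corollary (it is stated precisely for the norm closed case with $s.s.m.(A)=2$) to obtain some $a_{0}\in A$ such that
\[
(T-T_{a_{0}})(x)\in \mathcal{R}_{1}\oplus \mathcal{R}_{2} \quad\text{for every } x\in X.
\]
No further spatial or multiplicity arguments are needed beyond this input.

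Next, invoke the spectral synthesis hypothesis. As recorded in the paragraph preceding the two spectral-synthesis corollaries, spectral synthesis of $A$ implies that the Jacobson radical $\mathcal{R}_{j}$ of each quotient $A/I_{j}$ vanishes: writing $\mathcal{R}_{j}$ as the image under $\varphi_{j}$ of the intersection of all maximal ideals of $A$ containing $I_{j}$, spectral synthesis identifies this intersection with $I_{j}$ itself, so $\mathcal{R}_{j}=(0)$ for $j=1,2$. Substituting into the conclusion of the preceding Corollary forces $(T-T_{a_{0}})(x)=0$ for every $x\in X$, i.e. $T=T_{a_{0}}$. Under the spatial identification of Proposition \ref{ssmprop}, this says $T\in A$.

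The reverse inclusion $A\subseteq algLat(A)$ is tautological from the definition of $Lat(A)$. Together these give $A=algLat(A)$, which is the definition of reflexivity.

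I do not expect a genuine obstacle here: the corollary is essentially a packaging of the previous corollary together with the quoted consequence of spectral synthesis. The only place where one might wish to be more careful is the passage from spectral synthesis of $A$ to the vanishing of the radical in each quotient $A/I_{j}$, but the authors have explicitly cited this as a standing fact (with references \cite{Beckhoff, O'Farrell}), so it is legitimate to use it as a black box in the present proof.
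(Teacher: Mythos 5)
Your proof is correct and follows exactly the route the paper intends: the corollary is stated immediately after the observation that spectral synthesis forces $\mathcal{R}_{j}=(0)$, so it is meant to follow by combining the preceding norm-closed, $s.s.m.(A)=2$ corollary with that vanishing, which is precisely what you do. No issues.
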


We will study next the hereditary reflexivity of abelian algebras of finite split strict multiplicity $A\subseteq B(X)$. We will assume that $A$ is semisimple, i.e. that the intersection of all maximal, modular ideals is $(0)$.

It is known from \cite{Rickart} that any semisimple, abelian, Banach algebra has a unique norm topology. Notice that if $A$ is semisimple, the quotients of $A$ may not be semisimple (see \cite{Dixon}).

\begin{theorem}
Let $A\subseteq B(X)$ be an abelian, unital, semisimple, strongly closed subalgebra with $s.s.m.(A)=p\in \mathbb{N}$. Let $L\subseteq A$ be a norm closed subspace. If $T\in B(X)$ is such that $Tx\in \overline{Lx}$ for every $x\in X$, then there exists $l_{0}\in L$ such that $(T-T_{l_{0}})(x)\in \underset{j=1}{\overset{p}\sum}\oplus \mathcal{R}_{j}$ for every $x\in X$.
\end{theorem}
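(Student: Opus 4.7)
\medskip
\noindent My plan is to deduce the result from Theorem~\ref{thm4} and then upgrade the element $a_{0}\in A$ produced there to an element of $L$ via a closedness argument. The key observation is that, under the present hypotheses, the orbit $Le$ of a specific vector $e$ turns out to be norm closed.

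I would first verify that $T\in algLat(A)$: for any $K\in Lat(A)$ and $x\in K$, we have $Lx\subseteq Ax\subseteq K$, so $Tx\in\overline{Lx}\subseteq K$. Theorem~\ref{thm4} then supplies some $a_{0}\in A$ with $(T-T_{a_{0}})(x)\in\sum_{j=1}^{p}\oplus\mathcal{R}_{j}$ for every $x\in X$. The problem reduces to showing that $a_{0}$ may be chosen in $L$.

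For that, I would set $e=\sum_{j=1}^{p}\oplus\varphi_{j}(1)\in D_{p}$ and consider the linear map $\alpha:A\to D_{p}$ defined by $\alpha(a)=T_{a}e=\sum_{j=1}^{p}\oplus\varphi_{j}(a)$. This map is continuous, surjective by the definition of $D_{p}$, and injective because $\bigcap_{j}I_{j}=(0)$ (any $a$ with $ax_{j}=0$ for all $j$ annihilates each summand $X_{j}=Ax_{j}$ by commutativity, hence all of $X$, and thus vanishes as an operator). Since Remark~\ref{rem3}(ii) makes $D_{p}$ a Banach space, the open mapping theorem upgrades $\alpha$ to a Banach space isomorphism. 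Consequently $\alpha(L)=Le$ is closed in $D_{p}$, and a fortiori in $X$.

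Now I would invoke the hypothesis $Tx\in\overline{Lx}$ at $x=e$: since $Le$ is closed, $Te\in\overline{Le}=Le$, so $Te=T_{l_{0}}e$ for some $l_{0}\in L$. On the other hand, the construction in Theorem~\ref{thm4} gives $Te=T_{a_{0}}e$, and the injectivity of $\alpha$ then forces $a_{0}=l_{0}\in L$. Taking this common element as $l_{0}$ gives the desired conclusion. The step I expect to be the main obstacle is the closedness of $Le$; everything else is essentially a repackaging of Theorem~\ref{thm4}. (I note that the semisimplicity of $A$ does not seem to play any role in this particular argument; it is presumably needed for later applications to hereditary reflexivity.)
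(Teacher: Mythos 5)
Your proof is correct, and it shares the same overall architecture as the paper's: both arguments hinge on showing that the orbit map $a \mapsto \sum_{j=1}^{p}\oplus\varphi_{j}(a)$ carries $A$ topologically onto the closed subspace $D_{p}$, use this to produce an element $l_{0}\in L$ with $Te=T_{l_{0}}e$ at the cyclic-type vector $e=\sum_{j}\oplus\varphi_{j}(1)$, and then rerun the proof of Theorem \ref{thm4}. The difference is in how the topological isomorphism is obtained. The paper introduces the algebra norm $|||a|||=\sum_{j}\|\varphi_{j}(a)\|$, observes that closedness of $D_{p}$ makes $A$ complete in this norm, and invokes Rickart's uniqueness-of-norm theorem for semisimple commutative Banach algebras to conclude that $|||\cdot|||$ is equivalent to the operator norm. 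You instead note that $\alpha(a)=\sum_{j}\oplus\varphi_{j}(a)$ is a continuous bijection of the Banach space $A$ onto the Banach space $D_{p}$ (injectivity coming from $\bigcap_{j}I_{j}=(0)$, which holds because $A$ acts faithfully on $X=\sum_{j}\oplus Ax_{j}$) and apply the open mapping theorem. Your route is more elementary and, as you correctly observe, dispenses with the semisimplicity hypothesis for this statement; the paper's proof genuinely uses semisimplicity at the Rickart step. One small caveat: the statement of Theorem \ref{thm4} alone does not give $Te=T_{a_{0}}e$ (only that the difference lies in $\sum_{j}\oplus\mathcal{R}_{j}$), so your identification $a_{0}=l_{0}$ depends on the particular choice of $a_{0}$ made inside that theorem's proof, as you indicate; equivalently, and more cleanly, one can simply rerun the proof of Theorem \ref{thm4} starting from $l_{0}$, which is exactly what the paper does.
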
 

\begin{proof}
By Remark \ref{rem3} (ii) $D_{p}$ is closed. By Proposition \ref{ssmprop} we can assume that $X=\underset{j=1}{\overset{p}\sum}\oplus A/I_{j}$ and $D_{p}=\{\underset{j=1}{\overset{p}\sum}\oplus \varphi_{j}(a)|\; a\in A\}$.

For $a\in A$ set
$$|||a|||=\underset{j=1}{\overset{p}\sum}\|\varphi_{j}(a)\|.$$

It is easy to see that $|||\cdot |||$ is an algebra norm on $A$. Since $D_{p}$ is closed, $|||\cdot |||$ is a Banach algebra norm. By (\cite{Rickart} Corollary 2.5.18) A has a unique norm topology. Hence $|||\cdot |||$ is equivalent to the original norm of $A\subseteq B(X)$.

Let now $x_{0}=\underset{j=1}{\overset{p}\sum}\oplus \varphi_{j}(1)$. Then, in particular, $Tx_{0}\in \overline{Lx_{0}}$. There exists a sequence $\{l_{n}\}\subset L$ such that $Tx_{0}=\underset{n}{lim}\underset{j=1}{\overset{p}\sum}\oplus \varphi_{j}(l_{n}$).

Since $D_{p}$ is closed, $l_{n}\overset{|||\cdot |||}{\longrightarrow}l_{0} \in A$. Therefore $l_{n}\to l_{0} \in A$ in the original norm of $A$. Since $L$ is closed, $l_{0}\in L$. Hence
$$Tx_{0}=T(\underset{j=1}{\overset{p}\sum}\oplus \varphi_{j}(1))=\underset{j=1}{\overset{p}\sum}\oplus \varphi_{j}(l_{0}).$$

Repeating now the proof of Theorem \ref{thm4}, we get $(T-T_{l_{0}})(x)\in \underset{j=1}{\overset{p}\sum}\oplus \mathcal{R}_{j}$ for every $x\in X$.

\end{proof}

Using now Remark \ref{rem3} (i) instead of (ii), we get:

\begin{corollary}
Let $A\subseteq B(X)$ be an abelian, unital, semisimple, norm closed subalgebra with $s.s.m.(A)=2$. Let $L$ be a norm closed subspace. If $T\in B(X)$ is such that $Tx\in \overline{Lx}$ for every $x\in X$, then there exists $l_{0}\in L$ such that $(T-T_{l_{0}})(x)\in \mathcal{R}_{1}\oplus \mathcal{R}_{2}$ for every $x\in X$.
\end{corollary}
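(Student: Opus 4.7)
The plan is to mimic the proof of the preceding theorem almost verbatim, making one single substitution: wherever that proof invoked Remark \ref{rem3} (ii) — whose conclusion that $D_p$ is closed required $A$ to be strongly closed — I will instead invoke Remark \ref{rem3} (i), which yields the same closedness of $D_2$ under the weaker hypothesis that $A$ is only norm closed, at the price of restricting to $s.s.m.(A)=2$. Everything else in the argument carries over without change.

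Concretely, using Proposition \ref{ssmprop} I identify $X$ with $A/I_1\oplus A/I_2$, so that $D_2=\{\varphi_1(a)\oplus\varphi_2(a)\;|\;a\in A\}$. On $A$ I introduce the algebra norm $|||a|||=\|\varphi_1(a)\|+\|\varphi_2(a)\|$. By Remark \ref{rem3} (i) the subspace $D_2\subseteq X$ is closed, which makes $(A,|||\cdot|||)$ complete and hence a Banach algebra. Since $A$ is semisimple, Corollary 2.5.18 of \cite{Rickart} forces $|||\cdot|||$ to be equivalent to the original operator norm inherited from $B(X)$.

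Next I apply the hypothesis to the distinguished vector $x_0=\varphi_1(1)\oplus\varphi_2(1)$: there is a sequence $\{l_n\}\subset L$ with $\varphi_1(l_n)\oplus\varphi_2(l_n)\to Tx_0$. Closedness of $D_2$ produces $l_0\in A$ with $Tx_0=\varphi_1(l_0)\oplus\varphi_2(l_0)$ and $l_n\xrightarrow{|||\cdot|||}l_0$, hence $l_n\to l_0$ in the original norm; since $L$ is norm closed, $l_0\in L$. With $T(\varphi_j(1))=\varphi_j(l_0)$ in hand, I then re-run the maximal-ideal argument from the proof of Theorem \ref{thm4}: for any maximal ideal $\mathcal{M}\subseteq A$ and any $a\in A$, the element $x=\Psi(a)1-a$ lies in $\mathcal{M}$, so $\varphi_j(x)\in\varphi_j(\mathcal{M})$, and $T(\varphi_j(x))\in\overline{L\varphi_j(x)}\subseteq\varphi_j(\mathcal{M})$ because $L\subseteq A$, $\mathcal{M}$ is an ideal, and $\varphi_j(\mathcal{M})$ is closed by Remark \ref{rem3} (iii). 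Applying $\Psi$ to the resulting relation exactly as before yields $aa_0-b\in\mathcal{M}+I_j$, and intersecting over maximal ideals $\mathcal{M}\supseteq I_j$ delivers $(T-T_{l_0})(x)\in\mathcal{R}_1\oplus\mathcal{R}_2$.

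The only genuinely nontrivial step is the norm-equivalence argument in the second paragraph; it is the sole place where semisimplicity of $A$ is used, and it is what allows the extraction of $l_0\in L$ from the initial approximation. Everything else — the closedness of $D_2$, of $\varphi_j(\mathcal{M})$, and the passage through multiplicative functionals — has already been recorded, so I expect no further obstacle.
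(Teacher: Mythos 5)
Your proof is correct and follows exactly the route the paper takes: the paper's entire argument for this corollary is the one-line remark that one substitutes Remark \ref{rem3} (i) for Remark \ref{rem3} (ii) in the proof of the preceding theorem, which is precisely your plan, and you have filled in the details (including the observation that $L\varphi_j(x)\subseteq\varphi_j(\mathcal{M})$ replaces the $algLat(A)$ step) correctly. The only blemish is a harmless notational slip where $a_0$ appears in place of $l_0$ near the end.
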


If $\mathcal{R}_{j}=(0)$ for all $j=1,2,...,p$, in particular if $A$ has spectral synthesis, we get:

\begin{corollary}
Let $A\subseteq B(X)$ be an abelian, unital, strongly closed subalgebra with spectral synthesis. If $s.s.m.(A)=p\in \mathbb{N}$, then $A$ is hereditarily reflexive.
\end{corollary}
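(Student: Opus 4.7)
The plan is to derive this corollary directly from the preceding theorem by showing that spectral synthesis forces the ``radical term'' $\sum_{j=1}^{p}\oplus \mathcal{R}_{j}$ appearing there to vanish, and by observing that any strongly closed subspace of $A$ is in particular norm closed, so the theorem is applicable.

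I would begin by recording the algebraic consequences of spectral synthesis, which states that every closed ideal of $A$ equals the intersection of the maximal ideals containing it. Applied to the zero ideal this gives $\mathrm{Rad}(A) = \bigcap\{\mathcal{M}:\mathcal{M}\text{ maximal}\} = (0)$, so $A$ is semisimple, as required by the hypothesis of the preceding theorem. Applied instead to each $I_{j}$ it yields $I_{j} = \bigcap\{\mathcal{M} : \mathcal{M}\supseteq I_{j},\ \mathcal{M}\text{ maximal}\}$, which upon passage to the quotient $A/I_{j}$ is exactly the statement that $\mathcal{R}_{j} = (0)$.

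With this in hand, I would fix a strongly closed subspace $L \subseteq A$ and an operator $T \in B(X)$ with $Tx \in \overline{Lx}$ for every $x \in X$. Since the strong operator topology is coarser than the norm topology, $L$ is in particular norm closed, so the preceding theorem applies and produces an element $l_{0}\in L$ with $(T - T_{l_{0}})(x) \in \sum_{j=1}^{p}\oplus \mathcal{R}_{j} = (0)$ for every $x \in X$. Thus $T = T_{l_{0}}$, and under the identification $A = \{T_{a}:a\in A\}$ from Proposition \ref{ssmprop} this means $T \in L$. Hence $L$ is reflexive, and since $L$ was an arbitrary strongly closed subspace of $A$, the algebra $A$ is hereditarily reflexive.

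I do not anticipate any serious obstacle: the corollary is essentially a specialization of the preceding theorem to the case where the radical contributions vanish. The only points needing care are the unpacking of spectral synthesis into both semisimplicity of $A$ and the vanishing of each $\mathcal{R}_{j}$, and the elementary observation that strong closure implies norm closure, so that the theorem's norm-closed-subspace hypothesis is satisfied.
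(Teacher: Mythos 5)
Your proposal is correct and follows exactly the route the paper intends: the corollary is stated as an immediate consequence of the preceding theorem once spectral synthesis is unpacked into semisimplicity of $A$ and the vanishing of each $\mathcal{R}_{j}$, and your observations (including that a strongly closed subspace is norm closed, so the theorem applies) fill in the details faithfully.
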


\begin{corollary}
Let $A\subseteq B(X)$ be an abelian, unital, norm closed subalgebra with spectral synthesis. If $s.s.m.(A)=2$, then $A$ is hereditarily reflexive.
\end{corollary}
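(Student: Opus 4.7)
The plan is to deduce this corollary directly from the preceding one by exploiting the spectral synthesis hypothesis to force the radical terms to vanish. Specifically, I would first unpack what spectral synthesis buys us in the setup of Proposition \ref{ssmprop}: since each quotient $A/I_j$ inherits the relevant property (or, more directly, since the paper has already observed that spectral synthesis yields $\mathcal{R}_j = (0)$ for all $j$), the summand $\mathcal{R}_1 \oplus \mathcal{R}_2$ appearing in the preceding corollary collapses to $\{0\}$.

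Before invoking the previous corollary, I need to check that $A$ is semisimple, since that is a standing hypothesis there but not stated in the present corollary. This is immediate from spectral synthesis applied to the zero ideal: the intersection of all maximal ideals of $A$ equals $\{0\}$, which is precisely semisimplicity in the unital abelian setting.

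Now, to verify hereditary reflexivity, let $L \subseteq A$ be an arbitrary norm closed subspace and suppose $T \in B(X)$ satisfies $Tx \in \overline{Lx}$ for every $x \in X$. The preceding corollary supplies some $l_0 \in L$ with $(T - T_{l_0})(x) \in \mathcal{R}_1 \oplus \mathcal{R}_2$ for every $x \in X$. By the vanishing of the radicals established in the first step, this forces $(T - T_{l_0})(x) = 0$ for all $x$, i.e., $T = T_{l_0} \in L$. Since $L$ was an arbitrary norm closed subspace of $A$, this shows $A$ is hereditarily reflexive.

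Since every nontrivial ingredient has already been set up, I do not expect any real obstacle; the only subtlety worth flagging explicitly is the semisimplicity check, which is easily handled as above. The argument is essentially a one-line specialization of the previous corollary plus the definitional consequence of spectral synthesis.
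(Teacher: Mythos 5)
Your proposal is correct and follows exactly the route the paper intends: the corollary is stated as an immediate specialization of the preceding one, with spectral synthesis forcing $\mathcal{R}_{1}=\mathcal{R}_{2}=(0)$ (and, as you rightly flag, also supplying the semisimplicity hypothesis, since applying spectral synthesis to the zero ideal shows the radical of $A$ vanishes). Nothing further is needed.
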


\end{document}